\theoremstyle{plain}
\newtheorem{thm}{Theorem}[section]
\newtheorem{lem}[thm]{Lemma}
\newtheorem{prop}[thm]{Proposition}
\newtheorem{cor}[thm]{Corollary}
\newtheorem{qes}[thm]{Question}
\theoremstyle{definition}
\newtheorem{rem}[thm]{Remark}
\newtheorem{dfn}[thm]{Definition}
\newtheorem{exa}[thm]{Example}
\numberwithin{equation}{section}
\newcommand{\m}[1]{\mathbb{#1}}
\newcommand{\bs}{\bigskip}
\newcommand{\LO}{\text{LO}}
\newcommand{\In}{\in\m{Z}_{\geq0}}
\title{Left orderings on inductive limits of amalgamated free products}
\author{Chihaya Jibiki}
\address{Department of Mathematics, School of Science, Institute of Science Tokyo, 2-12-1 Ookayama, Meguro-ku, Tokyo 152-8550 Japan}
\email{chihaya.j@gmail.com}
\date{}
\keywords{Left orderings, Isolated orderings}
\begin{document}
\begin{abstract}
We study left orderings of countably generated groups. In particular, we construct left orderings of inductive limits of amalgamated free products by using isolated left orderings of the groups appearing in the inductive system. Moreover, we show that they are no longer isolated orderings. 
\end{abstract}
\maketitle

\section{Introduction}\label{sec:Intro}

Let $G$ be a group. A {\it left ordering} of $G$ is a total ordering invariant under left multiplication: $a<_{G}b$ implies $ga<_{G}gb$ for all $a,b,g\in G$. The theory of left orderings is a wide and active topic of research (see Clay and Rolfsen \cite{MR3560661}, Deroin, Navas and Rivas \cite{deroin2014groups} and Ghys \cite{MR1876932}).

In this theory, there is an important object called the {\it space of left orderings} $\LO(G)$ of $G$, which is the set of all left orderings of $G$. It is well-known that $\LO(G)$ has a natural topology: an open sub-basis of neighborhoods of an ordering $<_{G}$ in the space $\LO(G)$ consists of the subsets $U_{g}\coloneqq\{<'_{G}\in\LO(G)|1<'_{G}g\}$, where $g$ runs over all positive elements of $<_{G}$. An {\it isolated ordering} of $G$ is defined as an isolated point of $\LO(G)$.  Sikora \cite{MR2069015} proved that $\LO(G)$ is a Hausdorff, totally disconnected and compact space. Consequently, for a countable group $G$, $\LO(G)$ is homeomorphic to a Cantor set if and only if $G$ does not have any isolated ordering.
For over a decade, the existence of isolated orderings for several groups has been investigated: for example, free products, amalgamated free products, braid groups, etc \cite{MR3200370,MR3476136,MR2859890,MR1214782}. 

We define the {\it positive cone} of a left ordering $<_G$ of $G$ as the set of all positive elements of $<_G$. It follows that there exists a one-to-one correspondence between the set of positive cones and that of left orderings. In contrast to the case of finitely generated groups, we cannot treat left orderings of countably generated groups until now. There is a well-known fact which is useful to determine whether a left ordering of a finitely many generated group is isolated if the positive cone of a left ordering of a finitely generated group is finitely generated, then it is an isolated ordering. 
This fact does not work in the case of countably generated groups. On the other hand, some researchers have been exploring the possibility of the existence of countably many generated groups which has isolated orderings \cite{MR4055461}.

We focus on Dehornoy's work \cite{MR3200370} and Ito's work \cite{MR3476136}, which are rewritten as Theorem \ref{thm:Dehornoy} and \ref{thm:Ito} in this paper, respectively. The reason why we review their works both of them constructed interesting isolated orderings of finitely generated groups. Dehornoy \cite{MR3200370} discovered a natural left ordering of the braid group $B_{n}$, which is now called the {\it Dehornoy ordering}. Generalizing the Dehornoy ordering, Ito \cite{MR2998793} introduced the {\it Dehornoy-like ordering} of groups, and constructed isolated Dehornoy-like orderings of torus knot groups. Dehornoy \cite{MR3200370} developed a useful method of describing left orderings, and applied it to construct isolated orderings of various groups including amalgamated free products of finitely many torus knot groups. Ito \cite{MR3476136} constructed isolated orderings of groups obtained by partially central amalgamation. 

\medskip

The aim of this paper is to investigate whether or not there are isolated orderings of countably generated groups. In particular, we generalize the isolated left orderings of Dehornoy's work \cite[Corollary 8.5]{MR3200370} and Ito's work \cite[Theorem 1.1]{MR3476136}. 
For each isolated ordering constructed in Dehornoy's theorem and Ito's theorem, we construct the pair $( G_{(\infty)}=\varinjlim G_{(m)}, P_{(\infty)}=\varinjlim P_{(m)})$ of a positive cone and a group, where $ P_{(\infty)}$ and $\{P_{(m)}\}_{m\In}$ are positive cones of $G_{(\infty)}$ and $\{G_{(m)}\}_{m\In}$, respectively. In addition, all $P_{(m)}$ are positive cones of isolated orderings. The following theorems state that $ P_{(\infty)}$ are no longer isolated orderings of $G_{(\infty)}$.

\begin{thm}\label{thm:main1}
 Let $m$ be a non-negative integer. Let $G_{(m)}$ be the group with finite presentation
    \[
            G_{(m)} = \langle \,g_{-m},\dots,g_{0},\dots ,g_{m}\mid  g_{-m}^{k_{-m+1}}=g_{-m+1}^{l_{-m+1}},\dots,g_{m-1}^{k_{m}}=g_{m}^{l_{m}}\,\rangle,
    \]
    and $P_{(m)}$ the positive cone of $G_{(m)}$ defined as
    \[
            P_{(m)}=\langle\{\,g_{-m}\}\cup\{g_{-m}^{-k_{-m+1}+1}\dots g_{i-1}^{-k_{i}+1}g_{i}\mid -m+1\leq i\leq m\,\}\rangle^{+},
    \]
where $k_n$ and $l_n$ are coprime integers greater than one for every $n\in\mathbb{Z}$. Then the inductive limit $P_{(\infty)}= \varinjlim P_{(m)}$ of the inductive system $\{P_{(m)}\}_{m\geq0}$ is the positive cone of (a left ordering of) the inductive limit $G_{(\infty)}=\varinjlim G_{(m)}$ of the inductive system $\{G_{(m)}\}_{m\geq0}$, and it is approximated by their conjugate orderings. In particular, $P_{(\infty)}$ is not the positive cone of an isolated ordering of $G_{(\infty)}$.
\end{thm}

\begin{thm}\label{thm:main2}
 Let $n$ be an integer. Let $G_n$ be a finitely generated group, $z_n$ a non-trivial central element of $G_n$, $\langle z_n\rangle$ the subgroup of $G_n$ generated by $z_n$, and $P_n$ the positive cone of an isolated left ordering $<_n$ of $G_n$. We assume that an isomorphism $\varphi_n:\langle z_n \rangle \to \langle z_{n+1} \rangle$ is given for every $n$. Suppose that $G_n$, $<_n$ and $P_n$ satisfy the following conditions:
\begin{enumerate}[{\rm (i)}]
\item $P_n$ is generated by elements $g_{1,n},\dots,g_{j_n,n}$ of $G_n$ as a semigroup;
\item the inequality $g_{i,n}<_n z_n$ holds for every integer $i\in\{1,\dots,j_n\}$;
\item $\langle z_n\rangle$ is not isomorphic to $G_n$ for infinitely many $n$.
\end{enumerate}
Then the inductive limit $G_{(\infty)}$ of the inductive system of amalgamated free products of the system $\{G_n,\langle z_n\rangle,\varphi_n\}_{-m\leq n\leq m}$ admits a positive cone induced by $P_n$'s, which is approximated by their conjugate ordering. In particular, $P_{(\infty)}$ is not the positive cone of an isolated ordering of $G_{(\infty)}$.
\end{thm}

These theorems claim that if a family $\{P_{(m)}\}_{m\In}$ of positive cones of a  family $\{G_{(m)}\}_{m\In}$ of finitely generated groups is extended to the positive cone $ P_{(\infty)}$ of a left ordering of a countably generated group $G_{(\infty)}$, then $ P_{(\infty)}$ may not be the positive cone of an isolated ordering even if every $P_{(m)}$ is associated with an isolated orderings of $G_{(m)}$. In other words, the theorems suggest that there are obstacles to construct isolated orderings of countably generated groups by extending  known isolated orderings.

\medskip
This paper is organized as follows. In Section \ref{sec:Back}, we provide some backgrounds on the theory of left orderings. In Section \ref{sec:3}, we first treat left orderings of countably generated groups. Next, we define the inductive limit of amalgamated free products and prove Lemma \ref{lem:main}, which plays a major role in this paper. In Section \ref{sec:4}, we treat Dehornoy's and Ito's isolated orderings and prove the main theorems.

\section*{Acknowledgements}
The author thanks Hisaaki Endo, who is the author's supervisor for reading carefully and pointing out some errors, and Tetsuya Ito for his feedback and interest in this work. The author was supported by JST SPRING, Japan Grant Number JPMJSP2106.

\section{Backgrounds}\label{sec:Back}

In this section, we recall some standard facts about left orderings. We begin with the definition of the positive cone of a left ordering. 

\begin{dfn}
Given a group $G$, we let $<_{G}$ be a left ordering of $G$. We call $P\coloneqq \{\,g\in G\mid 1<_{G}g\,\}$ the {\it positive cone} of $<_{G}$. We call any element belonging to $P$ the {\it positive element} and any non-trivial element not belonging to $P$ the {\it negative element}. 
A group which admits a left ordering is called a {\it left-orderable} group.
\end{dfn}

Positive cones are characterized by the next proposition.

\begin{prop}\label{prop:basic}
The positive cone $P$ of a left ordering of a group $G$ has the following properties:

\begin{enumerate}[{\rm (i)}]
\item $P\cdot P\subset P$,
\item $ G=P\sqcup P^{{-1}}\sqcup\{1\}$.
\end{enumerate}
\end{prop}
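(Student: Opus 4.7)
The plan is to derive both properties as immediate consequences of the definition of a left ordering, using only its left-invariance and the fact that $<_G$ is a total order. For (i), I would take arbitrary $a, b \in P$, so $1 <_G a$ and $1 <_G b$, and left-multiply the second inequality by $a$ to obtain $a <_G ab$. Combining this with $1 <_G a$ via transitivity of the total order yields $1 <_G ab$, so $ab \in P$.

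For (ii), I would first establish the covering $G = P \cup P^{-1} \cup \{1\}$ by invoking totality: for any $g \in G$, exactly one of $g = 1$, $1 <_G g$, or $g <_G 1$ holds. The first two cases place $g$ in $\{1\}$ or $P$ respectively. In the third case, left-multiplying $g <_G 1$ by $g^{-1}$ converts it to $1 <_G g^{-1}$, so $g^{-1} \in P$ and hence $g \in P^{-1}$. For disjointness, the trichotomy already separates $\{1\}$ from $P$ and from $P^{-1}$, while an element $g \in P \cap P^{-1}$ would satisfy $1 <_G g$ and $1 <_G g^{-1}$ simultaneously; left-multiplying the second inequality by $g$ produces $g <_G 1$, contradicting $1 <_G g$.

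There is no real obstacle here; the argument is a direct unpacking of the definitions. The one point that requires attention is to ensure every manipulation uses only left-multiplication (never right-multiplication), which is precisely the structure guaranteed by the ``left'' in left-invariance.
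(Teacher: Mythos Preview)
Your proof is correct and is exactly the standard argument. The paper itself does not supply a proof of this proposition; it is stated as a basic fact and then used freely, so there is nothing to compare against beyond noting that your derivation is the expected one.
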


Conversely, a subset $P$ of the group $G$ which satisfies the conditions (i) and (ii) induces  the following left ordering $<_{G}$:
\[
g<_{G}h\stackrel{\text{def}}{\Longleftrightarrow} g^{-1}h\in P
\]
for all $g,h\in G$. Then $P$ coincides with the positive cone of $<_G$.

\bigskip

Therefore, we identify a left ordering with its positive cone. Since positive cones satisfy the axioms of semigroup, if a positive cone $P$ is generated by a finite subset $\{g_{1},\dots,g_{n}\}$ of $G$, then we write $P=\langle g_{1},\dots, g_{n}\rangle^{+}$.

\bs
We will now give the elementary notion of positive cones. Fix a group $G$, a left ordering $<_{G}$ of $G$ and its positive cone $P$.

A subgroup $H$ of $G$ is said to be a {\it convex subgroup} of the pair $(G,<_{G})$ or $(G,P)$ if $h_{1}<_{G}g<_{G}h_{2}$ implies $ g\in H$ for all $g\in G $ and $ h_{1},h_{2}\in H$. Convex subgroups form a chain. That is, for every  pair of convex subgroups $H_{1},H_{2}$ of the pair $(G,<_{G})$, either $ H_{1}\subset H_{2}$ or $H_{1}\supset H_{2}$ holds. Moreover, any intersection of convex subgroups is also a convex subgroup. For any convex subgroup $C$ of $(G,<_{G})$, there exist two useful properties: the first is that for some positive elements $c\in C$ and $g\in G$, the inequality $1<_{G}g<_{G}c$ implies $g\in C$; the second is that for any element $c\in C$ and any integer  $n$, $c^{n}\in C$ implies $c\in C$ \cite[Section 2.1]{MR3560661}.

A {\it Conradian ordering} $<_{G}$ is a left ordering of $G$ which has a more restrictive condition. Namely, if inequalities $1<_{G}g_{1}\in G$ and $1<_{G}g_{2}\in G$ hold, then there exists a natural number $n$ such that $g_{1}<_{G}g_{2}g_{1}^{n}$. The {\it Conradian soul} is the maximal (with respect to inclusions) convex subgroup of $(G,<_G)$ such that the restriction of $<_G$ is a Conradian ordering.

A left ordering is said to be {\it Archimedean} if for every pair of positive elements $g_{1},g_{2}\in G$, there exists $n\in\m{N}$ such that $h<_{G}g^{n}$ holds. The Archimedean property imposes a strong condition because if a group $G$ has an Archimedean left ordering, then $G$ is abelian \cite[Section 2.1]{MR3560661}. 

It is known that there is a relationship between the Conradian  and Archimedean property. Let $C\subset D$  be convex subgroups of $G$ with respect to a left ordering $<_{G}$. Then the pair $(C,D)$ is called a {\it convex jump} if there is no convex subgroup strictly between them. Moreover, the pair $(C,D)$ is called a {\it Conradian jump} if $(C,D)$ is a convex jump, $C$ is a normal subgroup of $D$, and the quotient ordering of $D/C$ obtained from $<_{G}$ is Archimedean. There exists a useful fact: if every convex jump $(C,D)$ of the left ordering $<_{G}$ is a Conradian jump, then $<_{G}$ is a Conradian ordering \cite[Section 9]{MR3560661}.

A left ordering $<_{G}$ is said to be {\it discrete} if $<_{G}$ has a minimal positive element. If a left ordering $<_{G}$ is not discrete, then it is said to be {\it dense}.

A left ordering $<_G$ is said to be {\it $g$-cofinal} for $g\in G$ if for any $g'\in G$, there exists $n\in \mathbb{Z}$ such that $g^{-n}<_G g'<_G g^n$ holds.

A left ordering $<_G$ is said to be {\it g-right ordering} for $g\in G$ if for any $g_1,g_2\in G$, $g_1<_G g_2$ implies $g_1g<_G g_2g$.

For any $g\in G$, the left ordering $<_{g}$ is obtained from a left ordering $<_{G}$ by conjugation: for any $g_{1},g_{2}\in G$, 
\begin{align}
g_{1}<_{g}g_{2}&\stackrel{\text{def}}{\Longleftrightarrow}g_{1}g<_{G}g_{2}g\\
&\Longleftrightarrow g^{-1}g_{1}g<_{G}g^{-1}g_{2}g.
\end{align}
 The left ordering $<_{g}$ is called a {\it conjugate ordering} of $<_{G}$.


\bs

Proposition \ref{prop:basic} implies that the set of all left orderings of a group $G$, which is written as $\LO(G)$, can be regarded as a subset of the power set $2^{G}$. The power set $2^{G}$ has natural topology: the set $2$, which consists of two points, is equipped with discrete topology and the set $2^{G}$ is equipped with product topology. Therefore, LO$(G)$ is equipped with relative topology induced by the topology of $2^G$. A basis of neighborhoods of a left ordering $<_{G}\in\LO(G)$ is the family of the sets $U_{F}=\{<'_{G}\in\LO(G)\mid 1<'_{G}f\text{ for all}\,f\in F\}$, where $F$ runs over all finite subsets of $<_{G}$-positive elements of $G$. A left ordering $<_{G}$ is isolated in $\LO(G)$ if and only if there is a finite set $S\subset G$ such that $<_{G}$ is the only left ordering satisfying $1<_{G}s$ for all $s\in S$. Such a left ordering is called the {\it isolated ordering} of $G$. If $\LO(G)$ is an uncountable set, then the isolated orderings of $G$ are called {\it genuine}.

The space $\LO(G)$ is compact and totally disconnected. If the group $G$ is countable, then $\LO(G)$ is metrizable \cite{MR2069015}. As a consequence, the space $\LO(G)$ for a countable left-orderable group $G$ is homeomorphic to the Cantor set if and only if $G$ does not have any isolated ordering. For this reason, it is important to investigate whether or not a given group admits isolated orderings. Many researchers have dealt with this problem for various group until now: free products \cite{MR2859890}, braid groups \cite{MR1859702}, amalgamated free products \cite{MR2998793,MR3476136,MR3641836}, etc. 
But all these groups are finitely generated groups except free products.

We will exhibit two theorems of Dehornoy and Ito,  in which isolated orderings are constructed.

\begin{thm}[Dehornoy {\cite[Corollary 8.5]{MR3200370}}]\label{thm:Dehornoy}
For $d\geq2$ and $k_{2},l_{2},k_{3},l_{3},\dots,k_{d},l_{d}\geq1$, let $G$ be the group with finite presentation $\langle\, x_{1},x_{2},\dots ,x_{d}\mid x_{1}^{k_{2}+1}=x_{2}^{l_{2}+1},x_{2}^{k_{3}+1}=x_{3}^{l_{3}+1},\dots,x_{d-1}^{k_{d}+1}=x_{d}^{l_{d}+1}\,\rangle$.
 Then $G$ is left-orderable, and the subsemigroup of $G$ generated by
 \[
 x_{1},x_{1}^{-k_{2}}x_{2},\dots,x_{1}^{-k_{2}}\dots x_{i-1}^{-k_{i}}x_{i}
 \]
 is the positive cone of a left ordering of $G$ which is isolated in LO$(G)$.
\end{thm}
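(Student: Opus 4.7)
The plan is to change generators from $x_1, \ldots, x_d$ to $a_i \coloneqq x_1^{-k_2} x_2^{-k_3} \cdots x_{i-1}^{-k_i} x_i$, verify the two positive cone axioms of Proposition \ref{prop:basic} for $P = \langle a_1, \ldots, a_d\rangle^+$, and then deduce isolatedness from the standard fact that a finitely generated positive cone of a finitely generated group defines an isolated left ordering.

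First I would invert the substitution to get $x_i = a_1^{k_2} a_2^{k_3} \cdots a_{i-1}^{k_i} a_i$, so the $a_j$'s generate $G$. Plugging this into $x_{i-1}^{k_i+1} = x_i^{l_i+1}$ and simplifying, each relation rewrites as a \emph{positive} word equality in $a_{i-1}, a_i$ (as one checks directly in the two-generator torus-knot case). Axiom (i), $P \cdot P \subset P$, is then automatic from the definition of $P$ as a subsemigroup, and the positivity of the relators ensures that the Cayley graph on the alphabet $\{a_j\}$ admits a consistent notion of leading letter. For axiom (ii), $G = P \sqcup P^{-1} \sqcup \{1\}$, I would proceed by induction on $d$. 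The base case $d = 2$ is a torus knot group, handled directly by the normal form in $\m{Z} \ast_{\m{Z}} \m{Z}$. For the inductive step, view $G$ as the amalgamated free product $G_{d-1} \ast_{C_d} T_d$ with $G_{d-1} = \langle x_1, \ldots, x_{d-1}\rangle$ carrying the inductive positive cone, $T_d = \langle x_{d-1}, x_d \mid x_{d-1}^{k_d+1} = x_d^{l_d+1}\rangle$ a torus knot group with positive cone $\langle x_{d-1}, a_d\rangle^+$, and $C_d = \langle x_{d-1}^{k_d+1}\rangle = \langle x_d^{l_d+1}\rangle$ the amalgamated cyclic subgroup, which is positive in both factors so the two orderings glue. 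By the normal-form theorem for amalgamated free products, every nonidentity reduced word is then shown to lie in $P$ or $P^{-1}$ according to the sign of its leading $a_j$-letter.

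Once (i) and (ii) are established, isolatedness is immediate: any left ordering in the basic open neighborhood $U_{\{a_1, \ldots, a_d\}}$ must contain $P$ in its positive cone, and by the trichotomy in (ii) equality follows, so $U_{\{a_1, \ldots, a_d\}} = \{<_G\}$. The main obstacle is the amalgamation step in (ii): one must choose coset representatives for $C_d$ in $G_{d-1}$ and in $T_d$ compatibly with positivity on both sides, so that the sign of an arbitrary reduced word is unambiguously read off its leading $a_j$-letter. Controlling this compatibility through iterated amalgamation is the combinatorial core of Dehornoy's original argument and the place where a careful case analysis of positive and negative syllables is required.
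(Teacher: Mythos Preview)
The paper does not prove this theorem at all: Theorem~\ref{thm:Dehornoy} is quoted in Section~\ref{sec:Back} as a background result from Dehornoy~\cite{MR3200370} (specifically Corollary~8.5 there), and is used as a black box throughout. There is therefore no ``paper's own proof'' to compare your proposal against.

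That said, your sketch has a concrete error in the inductive decomposition. You write $G = G_{d-1} \ast_{C_d} T_d$ with $T_d = \langle x_{d-1}, x_d \mid x_{d-1}^{k_d+1} = x_d^{l_d+1}\rangle$ and $C_d = \langle x_{d-1}^{k_d+1}\rangle$. But both $G_{d-1}$ and $T_d$ contain $x_{d-1}$ as a generator, so amalgamating them only over $\langle x_{d-1}^{k_d+1}\rangle$ produces a group in which the two copies of $x_{d-1}$ are \emph{not} identified (only their $(k_d+1)$-th powers are). That group is strictly larger than $G$. The correct decomposition is either $G = G_{d-1} \ast_{\langle x_{d-1}\rangle} T_d$ (amalgamating over the full cyclic subgroup generated by $x_{d-1}$), or, more simply, $G = G_{d-1} \ast_{C_d} \langle x_d\rangle$ with the second factor infinite cyclic. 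In the latter form the second factor is not a torus knot group, so your base-case reduction does not apply directly; in the former form the amalgamated subgroup is not the one you named, and its convexity and compatibility properties need to be re-examined.

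Beyond this, your proposal is more an outline than a proof: you explicitly defer the main combinatorial content (controlling signs of reduced words through the amalgamation) to ``Dehornoy's original argument.'' The overall strategy --- verify the positive-cone axioms for $\langle a_1,\dots,a_d\rangle^+$ and then invoke the standard fact that a finitely generated positive cone is isolated --- is sound, and is indeed how Dehornoy proceeds, but the substance lies precisely in the step you leave open.
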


\begin{thm}[T.Ito {\cite[Theorem 1.1]{MR3476136}}]\label{thm:Ito}
Let $G$ and $H$ be finitely generated groups. Let $z_{G}$ be a non-trivial central element of $G$, and $z_{H}$ a non-trivial element of $H$.

Let $\mathcal{G}=\{g_{1},\dots g_{m}\}$ be a finite generating set of $G$ which defines an isolated left ordering $<_{G}$ of $G$. We take a numbering of elements of $\mathcal{G}$ so that $1<_{G}g_{1}<_{G}\dots<_{G}g_{m}$ holds. Similarly, let $\mathcal{H}=\{h_{1},\dots,h_{n}\}$ be a finite generating set of $H$ which defines an isolated left ordering $<_{H}$ of $H$ such that the inequalities $1<_{H} h_{1}<_{H}\dots <_{H}h_{n}$ holds.

We assume the cofinality assumptions {\rm [CF(G)]},{\rm [CF(H)]}, and the invariance assumption {\rm [INV(H)]}:
\begin{align}
{\rm [CF(G)]}\ \ &g_{i}<_{G}z_{G}\ \text{holds for all}\ i;\\
{\rm [CF(H)]}\ \ &h_{i}<_{H}z_{H}\ \text{holds for all}\ i;\\
{\rm [INV(H)]}\ \ &<_{H}\text{is a $z_{H}$-right ordering.}
\end{align}

Let $X=G*_{\m{Z}}H=G*_{\langle z_{G}=z_{H}\rangle}H$ be the amalgamated free product of $G$ and $H$ over $\m{Z}$. For $i=1,\dots,m$, let $x_{i}=g_{i}z_{H}^{-1}h_{1}$. Then we have the following results.
\begin{enumerate}[$(1)$]
\item The generating set $\{x_{1},\dots,x_{m},h_{1},\dots,h_{n}\}$ of $X$ defines an isolated left ordering $<_{X}$ of $X$.
\item The isolated ordering $<_{X}$ does not depend on a choice of generating sets $\mathcal{G}$ and $\mathcal{H}$. Thus, $<_{X}$ only depends on the isolated orderings $<_{G},<_{H}$ and the elements $z_{G},z_{H}$.
\item The natural inclusions $\iota_{G}:G\to X$ and $\iota_{H}:H\to X$ are order-preserving homomorphisms.
\item $1<_{X}x_{1}<_{X}\dots<_{X}x_{m}<_{X}h_{1}<_{X}\dots h_{n}<_{X}z_{H}=z_{G}$. Moreover, $z=z_{G}=z_{H}$ is $<_{X}$-cofinal and the isolated ordering $<_{X}$ is a $z$-right ordering.
\item Let $Y$ be a non-trivial proper subgroup of $X$. If $Y$ is $<_{X}$-convex, then $Y=\langle x_{1}\rangle$, the infinite cyclic group generated $x_{1}$.
\end{enumerate}
\end{thm}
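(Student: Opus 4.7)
The plan is to take as candidate positive cone $P_X \subseteq X$ the subsemigroup generated by $S := \{x_1,\ldots,x_m,h_1,\ldots,h_n\}$, to verify the cone axioms of Proposition \ref{prop:basic}, and then to read off the structural conclusions (2)--(6). First I would check that $S$ generates $X$ as a group: the $h_j$'s generate $H$, so $z_H \in \langle S\rangle$, and $g_i = x_i h_1^{-1} z_H$ recovers all of $\mathcal{G}$; since $z_G = z_H$, the subgroup $\langle S\rangle$ contains both $G$ and $H$, hence equals $X$. Each $g_i$ is moreover a positive product in $S$: rewrite $g_i = x_i \cdot h_1^{-1} \cdot z_H$, express $z_H$ as a positive word in the $h_j$'s using [CF(H)], and move the $h_1^{-1}$ through using [INV(H)] together with centrality of $z_G$. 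Extending this to arbitrary elements of $P_G$ and $P_H$ yields part (3) (modulo the cone axioms) and the chain of inequalities in (4).

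The main step is to show $X = P_X \sqcup P_X^{-1} \sqcup \{1\}$. I would invoke the normal form theorem for $X = G *_{\langle z \rangle} H$ with $z := z_G = z_H$: fix left transversals $T_G$ of $\langle z\rangle$ in $G$ and $T_H$ of $\langle z\rangle$ in $H$ (taking $<_G$- and $<_H$-least coset representatives), so that every $w \in X$ has a unique expression $w = z^k \cdot f_1 f_2 \cdots f_r$ with the $f_j$ alternating between $T_G \setminus \{1\}$ and $T_H \setminus \{1\}$. Define a candidate $Q \subseteq X$ by a lexicographic rule: inspect the sign of $z^k f_1$ under $<_G$ or $<_H$, resolving ties by proceeding to $f_2, f_3, \ldots$. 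Centrality of $z_G$ in $G$ and $z$-right invariance [INV(H)] ensure that $Q$ is well-defined and closed under multiplication; the delicate case is when two adjacent syllables collapse into $\langle z\rangle$ and shift $k$. Finally, verify $Q = P_X$ by showing each $s \in S$ lies in $Q$ and that $Q$ is generated as a semigroup by $S$.

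For isolation (1), given any left ordering $<'$ of $X$ making every element of $S$ positive, I would translate the finite witness sets certifying isolation of $<_G$ and $<_H$ into finite subsets of $X$ via $g_i = x_i h_1^{-1} z$ and $z = (\text{positive word in the } h_j)$, and show they are forced $<'$-positive. Consequently $<'$ restricts to $<_G$ on $\iota_G(G)$ and to $<_H$ on $\iota_H(H)$, and the normal form argument from the previous step propagates this agreement to all of $X$, giving $<' = <_X$. The remaining items then follow: the $z$-cofinality and $z$-invariance in (4) because $z$ dominates every $s \in S$ by [CF(G)], [CF(H)] and the lexicographic rule is manifestly $z$-right invariant; the rank bound (5) by counting $|S| = m + n$; and (6) because a proper convex subgroup $Y \subseteq X$ cannot contain the cofinal element $z$, so it is contained in the Conradian soul of $<_X$, which a direct analysis of the normal form rule identifies as $\langle x_1\rangle$.

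The main obstacle is closure of $Q$ under multiplication in Step 2. When two alternating syllables $f_r$ and $f'_1$ meet with both in $G$ (or both in $H$), they may combine into an element of $\langle z\rangle$, shortening the normal form and shifting the exponent $k$; one must track signs consistently through every such reduction. The hypotheses [CF(G)], [CF(H)], [INV(H)] together with centrality of $z_G$ are engineered precisely so that these sign changes agree across the amalgamation, but the bookkeeping is intricate and requires a careful case analysis on the sign of the emerging syllable in its factor ordering and on how it interacts with the leading $z^k$.
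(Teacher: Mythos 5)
This statement is not proved in the paper at all: it is quoted verbatim from Ito \cite{MR3476136} as background (Theorem \ref{thm:Ito}), so there is no in-paper argument to compare yours against. Judged on its own, your outline has the right skeleton (take the subsemigroup $P_X$ generated by $S$, verify the two cone axioms of Proposition \ref{prop:basic} via the normal form in $G*_{\langle z\rangle}H$, then deduce (2)--(6)), and the preliminary observations are sound --- e.g.\ $g_i=x_i\cdot(h_1^{-1}z_H)$ is a positive word in $S$ because $z_H\in P_H$ and $h_1<_H z_H$. But the central construction in your Step 2 is wrong, not merely incomplete.

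Concretely: with $z=z_G=z_H$ central in $G$ and the transversals chosen by cofinality (the representative $a$ of each coset satisfying $1\leq a< z$), the normal form of $x_1=g_1z^{-1}h_1$ is $z^{-1}\cdot g_1\cdot h_1$ with $g_1\in T_G$ and $h_1\in T_H$, since ${\rm [CF(G)]}$ gives $1<_G g_1<_G z$. Your rule ``inspect the sign of $z^kf_1$'' then examines $z^{-1}g_1$, which is strictly $<_G$-negative, with no tie to break; so $x_1\notin Q$, contradicting item (4), which forces $1<_X x_1$. The failure is structural rather than a fixable bookkeeping issue: in the true ordering one has $1<_X x_1$ while $x_1h_1^{-1}=g_1z^{-1}$ is $<_X$-negative by (3), so the sign of an element is genuinely not determined syllable-by-syllable from the left --- the $<_G$-deficit of $g_1z^{-1}$ below $1$ must be weighed against the trailing $h_1$. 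Any correct proof (Ito's included) has to handle exactly this cancellation between the $G$-part and the $H$-part, e.g.\ by showing every nontrivial element of $X$ admits a representative word that is either positive or negative in $S$, by induction on syllable length; a left-lexicographic rule on the amalgamated normal form cannot produce this cone. The isolation argument in your Step 3 also presupposes that $Q=P_X$ has been established, so it inherits the gap.
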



\begin{rem}
In general, the above two theorems provide different isolated orderings for the following reason: the group $X$ of Theorem \ref{thm:Ito} necessarily has a cofinal element, while the group $G$ of Theorem \ref{thm:Dehornoy} may have no cofinal elements.
\end{rem}

\section{Countably generated groups}\label{sec:3}

In this section, we study left orderings of countably generated group. We consider general countably generated groups in Section \ref{subsec:1}. In Section \ref{subsec:2}, we construct the inductive limit $G_{(\infty)}$ of an inductive system $\{G_{(m)}\}_{m\In}$ of amalgamated free products, which appear in Thoerem \ref{thm:main1} and in Theorem \ref{thm:main2}.

\subsection{General countably generated groups}\label{subsec:1}

Let $G_{(\infty)}$ be a countably generated left-orderable group and $\{g_{i}\}_{i\in\m{N}}$ a countable generating set of $G_{(\infty)}$. Let $G_{(m)}$ be the subset of $G_{(\infty)}$ generated by $g_1,\dots,g_m$ for every $m\in\m{N}$. Since $G_{(i)}$ is a subgroup of $G_{(j)}$ for all $i\leq j$, we can consider $G_{(\infty)}$ as the inductive limit of  the inductive system $\{G_{(m)}\}_{m\in \m{N}}$.

\begin{prop}\label{prop:general}
Let $G_{(\infty)}$ be the group generated by a countable set $\{g_{i}\}_{i\in\m{N}}$ and $ P_{(\infty)}$ a subset of $G_{(\infty)}$. Let $G_{(m)}$ be the subset of $G_{(\infty)}$ generated by $g_1,\dots,g_m$  and $P_{(m)}\coloneqq  P_{(\infty)}\cap G_{(m)}$ for all $m\in\m{N}$.

\begin{enumerate}[{\rm (1)}]
\item The set $P_{(m)}$ is a positive cone of $G_{(m)}$ for all $m\in\m{N}$ if and only if the set $ P_{(\infty)}$ is a positive cone of $G_{(\infty)}$.
\item    Assume that either of (hence both of) the conditions in (i) holds. Then the positive cone $P_{(m)}$ of the group $G_{(m)}$ is Conradian for all $m\in\m{N}$ if and only if the positive cone $ P_{(\infty)}$of the group $G_{(\infty)}$ is Conradian.
\end{enumerate}

\end{prop}

\begin{proof}
(1) We will show that $P_{(m)}$ and $P_{(\infty)}$ satisfy the conditions (i) and (ii) of Proposition \ref{prop:basic}.

Suppose that $P_{(m)}$ is a positive cone of $G_{(m)}$ for all $m\in\mathbb{N}$. Given any $p_{1},p_{2}\in P_{(\infty)}$, we can find $m_{1},m_{2}\in\m{N}$ such that $p_{1}\in G_{(m_{1})},p_{2}\in G_{(m_{2})}$, respectively. We put $M=\max\{m_{1},m_{2}\}$. Since both two elements $p_{1},p_{2}$ belong to the positive cone $P_{(M)}$, we have $p_{1}p_{2}\in P_{(M)}\subset  P_{(\infty)}$. Thus $P_{(m)}$ satisfies the condition (i). Given any non-trivial $g\in G_{(\infty)}$, we can find $m\in\m{N}$ such that $g\in G_{(m)}$. Since $P_{(m)}$ is the positive cone of a left ordering of $G_{(m)}$, we have either $g\in P_{(m)}\subset P_{(\infty)}$ or $g\in (P_{(m)})^{-1}\subset P_{(\infty)}^{-1}$. Thus, $P_{(m)}$ satisfies the condition (ii).

Suppose that $P_{(\infty)}$ is a positive cone of $G_{(\infty)}$. Since $ P_{(\infty)}$ is a semigroup and $G_{(m)}$ is a group, $ P_{(m)}$ is a semigroup. Since $ P_{(\infty)}$ satisfies the condition (i), for all non-trivial $g\in G_{(m)}$, either $g\in  P_{(\infty)}$ or $ g\in P_{(\infty)}^{{-1}}$ follows. Namely, we have either $g\in  P_{(\infty)}\cap G_{(m)}=P_{(m)}$ or $ g\in P_{(\infty)}^{{-1}}\cap G_{(m)}=(P_{(m)})^{-1}$.
 
 (2) It is proved in the same way as the proof of (1). 
\end{proof}

Proposition \ref{prop:basic} shows the relation between the global positive cone $ P_{(\infty)}$ and the local positive cones $P_{(m)}$.

\subsection{Inductive limits of amalgamated free products}\label{subsec:2}
In this subsection, we show the key Lemma \ref{lem:main}, which will be used to extend Theorem \ref{thm:Dehornoy} and Theorem \ref{thm:Ito} to theorems for some countably generated groups. Observing the two theorems, we can construct countably generated groups $G_{(\infty)}$ as inductive limits of amalgamated free products. We show how to construct $G_{(\infty)}$.

\bigskip
Let $\{G_{n}\}_{n\in\m{Z}}$ be a family of groups, $\{A_{n}\}_{n\in\m{Z}}$ a family of subgroups of $\{G_{n}\}_{n\in\m{Z}}$ and $\{\varphi_{n}:A_{n}\to G_{n+1}\}_{n\in\m{Z}}$ a set of injective homomorphisms. Then we construct the inductive limit by the induction step.
\medskip

We first set $G_{(0)}\coloneqq G_{0}$.

If $G_{(m)}$is defined and $G_{m}$ and $G_{-m}$ are subgroups of $G_{(m)}$ for some $m\in\m{Z}_{\geq0}$, then the group $G'_{(m)}$ is defined as the amalgamated free product:
\[
G'_{(m)}\coloneqq G_{(m)}*G_{m+1}(A_{m}\xhookrightarrow{\varphi_{m}} G_{m+1}).
\]
Hence,  $G_{m+1}$ and $G_{(m)}$ are also subgroups of $ G'_{(m)}$. This implies that $\varphi_{-m-1}:A_{-m-1}\to G_{-m}$ induces $\varphi_{-m-1}:A_{-m-1}\to G'_{(m)}$.

Moreover, the group $G_{(m+1)}$ is defined as the amalgamated free product:
\[
G_{(m+1)}\coloneqq G_{-m-1}*G'_{(m)}(A_{-m-1}\xhookrightarrow{\varphi_{-m-1}} G'_{(m)}).
\]
Hence, $G_{-m-1}$ and $G'_{(m)}$ are also subgroups of $ G_{(m+1)}$, i.e. $G_{-m-1}$ and $G_{m+1}$ are subgroups of $G_{(m+1)}$. 
\medskip

Processing the above construction recursively, we obtain the inductive limit $G_{(\infty)}\coloneqq \varinjlim G_{(m)}$ called the {\it inductive limit of amalgamated free products} $\{G_{(m)}\}_{m\In}$ of the family $(G_{n},A_{n},\varphi_{n})_{n\in\m{Z}}$.
\bs

Let $\iota_{n}:A_{n}\hookrightarrow G_{n}$ be the inclusion for each $n\in\m{Z}$. The group $G_{(\infty)}$ is considered to be the amalgamated free product of countably many groups shown in Figure \ref{figure}.

\begin{figure}[h]
\begin{center}
\includegraphics[scale=0.6]{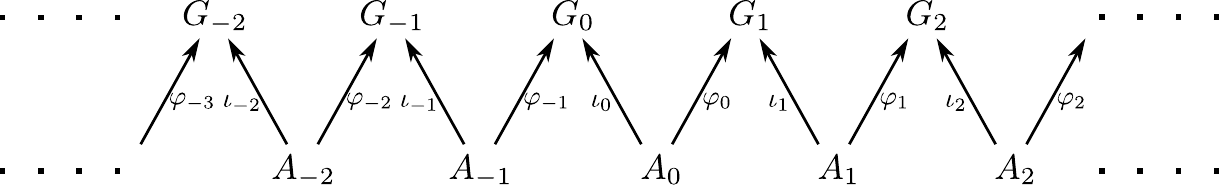}
\caption{ the inductive limit of amalgamated free products}\label{figure}
\end{center}
\end{figure}

We define the injective homomorphism $\varphi\coloneqq \bigcup_{n=-\infty}^{+\infty}\varphi_{n}:\bigcup_{n=-\infty}^{+\infty}A_n\hookrightarrow G_{(\infty)}$ by $\varphi(g_{n})=\varphi_{n}(g_{n})$ for $g_{n}\in A_{n}$. In addition, if a positive cone $P_{(m)}$ of $G_{(m)}$ and a positive cone $ P_{(\infty)}$ of $G_{(\infty)}$ are given, then we define $<_{m}$ as the left ordering induced by $P_{(m)}$ and $<_{\infty}$ as the left ordering induced by $ P_{(\infty)}$, respectively.

\bs

Our interest is in whether the inductive limit of amalgamated free products is left-orderable. We prove the next proposition.

\begin{prop}
Let $G_{(\infty)}$ be the inductive limit of amalgamated free products $\{G_{(m)}\}_{m\In}$ of the family $(G_{n},A_{n},\varphi_{n})_{n\in\m{Z}}$.
\begin{enumerate}[{\rm (i)}]
\item The group $G_{(\infty)}$ is left-orderable if and only if the group $G_{(m)}$ is left-orderable for all $m\In$.
\item The group $G_{(\infty)}$ is Conradian-orderable if and only if the group $G_{(m)}$ is Conradian-orderable for all $m\In$.
\end{enumerate}
\end{prop}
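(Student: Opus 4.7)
The forward direction is immediate: restricting a (Conradian) left ordering of $\tilde G$ to the subgroup $G_{(m)}$ yields a (Conradian) left ordering of $G_{(m)}$, since total order, left invariance, and the Conrad condition all pass to subgroups.

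For the converse of (i), my plan is to produce a compatible sequence of positive cones and then invoke Proposition~\ref{prop:general}. The restriction map $r_m\colon\LO(G_{(m+1)})\to \LO(G_{(m)})$, $P\mapsto P\cap G_{(m)}$, is continuous, since it is induced by the coordinate projection $2^{G_{(m+1)}}\to 2^{G_{(m)}}$. By Sikora's theorem each $\LO(G_{(m)})$ is compact Hausdorff, and nonempty by hypothesis. Hence the inverse limit $\varprojlim_{m}\LO(G_{(m)})$ is nonempty by the standard finite-intersection argument inside the compact product $\prod_{m}\LO(G_{(m)})$. Choosing a point $(P_{(m)})$ of the inverse limit and setting $\tilde P:=\bigcup_{m}P_{(m)}$, the compatibility relations $P_{(m+1)}\cap G_{(m)}=P_{(m)}$ immediately give $\tilde P\cap G_{(m)}=P_{(m)}$ for every $m$, so Proposition~\ref{prop:general}(i) yields that $\tilde P$ is a positive cone of $\tilde G$.

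For (ii), I run the same argument inside the subspace $\text{CO}(G_{(m)})\subseteq\LO(G_{(m)})$ of Conradian orderings. A direct inspection of the Conrad condition (it is a conjunction, over pairs of non-identity elements, of a disjunction of sub-basic clopen conditions) shows that $\text{CO}(G_{(m)})$ is closed in $\LO(G_{(m)})$, hence compact. Since restrictions of Conradian orderings are again Conradian, the map $r_m$ restricts to a continuous map $\text{CO}(G_{(m+1)})\to\text{CO}(G_{(m)})$, and the previous inverse-limit argument followed by Proposition~\ref{prop:general}(ii) concludes.

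The heart of the argument is the nonemptiness of the inverse limit; this is a standard compactness fact but is the main obstacle, as without it there is no a priori way to glue the orderings of the $G_{(m)}$ coherently. A secondary bookkeeping point is that Proposition~\ref{prop:general} was stated for a finitely generated tower $G_{(m)}=\langle g_1,\dots,g_m\rangle$, whereas here the $G_{(m)}$ need not be finitely generated; however, the proof of Proposition~\ref{prop:general} uses only that $\tilde G=\bigcup_m G_{(m)}$ is a nested union of subgroups, so it applies verbatim in the present setting.
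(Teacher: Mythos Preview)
Your argument is correct and in fact more complete than the paper's own proof. The paper merely re-indexes the generating sets of the $G_{n}$ so that $\tilde G$ and the $G_{(m)}$ fit the framework of Proposition~\ref{prop:general}, and then declares that ``all arguments follow from Proposition~\ref{prop:general}''. But Proposition~\ref{prop:general} only says that a \emph{given} subset $\tilde P\subset\tilde G$ is a positive cone if and only if each restriction $\tilde P\cap G_{(m)}$ is; it does not by itself manufacture a coherent family $(P_{(m)})$ out of arbitrarily chosen orderings on the $G_{(m)}$. Your inverse-limit argument using the compactness of $\LO(G_{(m)})$ is precisely the missing step, and is the standard way to fill it. Your remark that Proposition~\ref{prop:general} only needs $\tilde G=\bigcup_m G_{(m)}$ as a nested union (not a finitely generated tower) is also on point and is needed here, since the $G_n$ in the paper's construction are not assumed countable.

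One small caveat on part~(ii): your justification that $\mathrm{CO}(G_{(m)})$ is closed in $\LO(G_{(m)})$ is slightly loose. The Conrad condition as stated in the paper carries an existential quantifier over $n$, so a naive ``disjunction of clopen conditions'' gives, for each pair $(g,h)$, only an \emph{open} set, and an intersection of open sets need not be closed. The clean way to get closedness is to use the well-known equivalent formulation that $n=2$ always suffices (so each clause becomes genuinely clopen), after which your argument goes through verbatim.
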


\begin{proof}
Let $\{g_{i,n}\}_{i\in I_{n}}$ be a set of generators of $G_{n}$ for all $n\in\m{Z}$. Then, it follows that $G_{(m)}$ is generated by $\{g_{i,j}\mid i\in I_{j},\,-m\leq j\leq m\}$ and $G_{(\infty)}$ is generated by $\{g_{i,j}\mid i\in I_{j},\, j\in\m{Z}\}$.  The proposition is proved in a similar way to the proof of Proposition \ref{prop:general}. 
\end{proof}

Bludov and Glass \cite[Theorem A]{MR2551464} gave a necessary and sufficient condition, called compatibility, for the inductive limit of amalgamated free products to be left-orderable in terms of the family $(G_{n},A_{n},\varphi_{n})_{n\in\m{Z}}$.

It is remarkable that inductive limits of amalgamated free products can be embedded in HNN-extensions.

\begin{prop}\label{prop:HNN}
Let $\{G_{n}\}_{n\in\m{Z}}$ be a family of copies of a group $G$, $\{A_{n}\}_{n\in\m{Z}}$ a family  of copies of a subgroup $A$ of $G$ and $\{\varphi_{n}:A_{n}\to G_{n+1}\}_{n\in\m{Z}}$ a set of injective homomorphisms. Let $ P_{(\infty)}$ be the positive cone of a left ordering of $G_{(\infty)}$.

Then, $G_{(\infty)}$ is embedded in the HNN-extension 
\[
H=\langle\, G,t\mid t^{-1}at=\varphi(a)\ \text{for all}\ a\in A\rangle.
\]
Moreover, $ P_{(\infty)}$ can be extended to two positive cones of left orderings of $H$.
\end{prop}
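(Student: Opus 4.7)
The plan is to realize $\tilde{G}$ as the kernel of a natural retraction $H\to\m{Z}$, then build the two left orderings of $H$ by lexicographic extension along the resulting short exact sequence.

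First, I would define $\psi:\tilde{G}\to H$ on each copy $G_{n}$ of $G$ by $\psi(g^{(n)})=t^{n}gt^{-n}$, where $g^{(n)}$ denotes the image in $G_{n}$ of $g\in G$. To see this extends to a well-defined homomorphism on every $G_{(m)}$ (and hence on $\tilde{G}=\varinjlim G_{(m)}$), I would verify compatibility with every amalgamation relation: for $a\in A$, the identification $a^{(n)}=\varphi(a)^{(n+1)}$ that holds in $\tilde{G}$ maps under $\psi$ to $t^{n}at^{-n}=t^{n+1}\varphi(a)t^{-(n+1)}$, which rearranges to the defining HNN relation $t^{-1}at=\varphi(a)$ of $H$. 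By the universal property of amalgamated products and of the inductive limit, $\psi$ is then a well-defined homomorphism.

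Next, I would show $\psi$ is injective. The assignment $\pi:H\to\m{Z}$ with $\pi(t)=1$ and $\pi(g)=0$ for all $g\in G$ is well defined because every HNN relation has $\pi$-weight zero on each side. Let $N=\ker\pi$; it is the normal closure of $G$ in $H$, and manifestly contains $\psi(\tilde{G})$. By Britton's normal form for HNN-extensions (equivalently, Bass-Serre theory applied to the action of $H$ on its Bass-Serre tree), $N$ is generated by the shifted copies $t^{n}Gt^{-n}$ subject only to the relations of each copy together with the identifications $t^{n}at^{-n}=t^{n+1}\varphi(a)t^{-(n+1)}$ obtained by conjugating the HNN relation. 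These are precisely the defining relations of $\tilde{G}$, so $\psi$ restricts to an isomorphism $\tilde{G}\xrightarrow{\sim}N\leq H$.

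Finally, to extend $\tilde{P}$, I would apply the lexicographic construction to the short exact sequence $1\to\psi(\tilde{G})\to H\xrightarrow{\pi}\m{Z}\to 1$. For each of the two positive cones $Q^{\varepsilon}$ on $\m{Z}$ ($\varepsilon\in\{+,-\}$), set
\[
P_{H}^{\varepsilon}\coloneqq \pi^{-1}(Q^{\varepsilon})\sqcup \psi(\tilde{P}).
\]
Closure under multiplication follows from $\pi$ being a homomorphism, closure of $Q^{\varepsilon}$, and closure of $\tilde{P}$, checked by cases on whether each factor lies in $\ker\pi$. Trichotomy follows from the partition $H=\pi^{-1}(Q^{\varepsilon})\sqcup\ker\pi\sqcup\pi^{-1}((Q^{\varepsilon})^{-1})$ together with the trichotomy for $\tilde{P}$ on $\ker\pi=\psi(\tilde{G})$. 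No compatibility between $\tilde{P}$ and conjugation by $t$ is required, since the kernel-level test is only applied to elements of $\psi(\tilde{G})$. The two sign choices for $Q^{\varepsilon}$ yield the two announced extensions, and by construction $P_{H}^{\varepsilon}\cap\psi(\tilde{G})=\psi(\tilde{P})$. The main obstacle is the injectivity of $\psi$, where one invokes Britton's lemma to ensure $N$ carries no relations beyond those already present in $\tilde{G}$; once $\tilde{G}\cong\ker\pi$ is established, the extension of positive cones is entirely formal.
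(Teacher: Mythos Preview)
Your proposal is correct and follows essentially the same route as the paper. The paper's sketch phrases the structural fact as $H\cong\tilde{G}\rtimes\langle t\rangle$, with $t$ acting on $\tilde{G}$ as the shift automorphism $g_{n}\mapsto g_{n+1}$, and then invokes the convex extension procedure along the resulting split exact sequence $1\to\tilde{G}\to H\to\langle t\rangle\to1$; your argument unpacks exactly this, giving the explicit embedding $g^{(n)}\mapsto t^{n}gt^{-n}$, identifying its image with $\ker\pi$ via Britton's lemma, and carrying out the lexicographic extension by hand.
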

\begin{proof}[Sketch of proof] A similar assertion was proved by Bludov and Glass \cite[Theorem B]{MR2551464}.

Let $t$ be the automorphism of $G_{(\infty)}$ defined by $t(g_{n})=g_{n+1}$, where $g_{n}\in G_{n}$ and $g_{n+1}\in G_{n+1}$ represent  the same element of $G$. Then, $H$ is isomorphic to the semidirect product $G_{(\infty)}\rtimes \langle t\rangle$. Namely,
\[
1\to G_{(\infty)}\to H\to \langle t\rangle\to1
\]
is a split short exact sequence. Hence, $ P_{(\infty)}$ is extended to two left orderings of $H$ by the convex extension procedure \cite[Section 2.1.1]{deroin2014groups}.
\end{proof}

Next, we show three examples of inductive limits of amalgamated free products. 

\begin{exa}
\medskip
\noindent{\bf (a)}
For every $n\in\m{Z}$, we put $A_{n} =\{1\}$. Since the homomorphism $\varphi_{n}:A_{n}\hookrightarrow G_{n+1}$ is trivial, the inductive limit $G_{(\infty)}$ of amalgamated free products is isomorphic to the free product $*_{n\in\m{Z}}G_{n}$. Note that that all non-trivial free products do not have any isolated ordering \cite{MR2859890}.

\medskip
\noindent{\bf (b)}
Suppose that there exists a non-negative integer $m$ such that for all $n\geq m$ and $l\le -m$, $G_{n}=G_{l}=\{1\}$ holds. Then, the inductive limit $G_{(\infty)}$ of amalgamated free products is isomorphic to the amalgamated free product $*_{i\in I}G_{i}(A_{i}\xhookrightarrow{\varphi_{i}} G_{i+1})\,(I=\{1,\dots,m-2\})$ of finitely many groups. Hence $G_{(\infty)}$ is an ordinary amalgamated free product.

\medskip
\noindent{\bf (c)}For every $n\in\m{Z}$ and some $l\in\m{N}$, we set $G_{n}=A_{n}=\langle g_{n}\rangle\simeq\m{Z}$ and $\varphi_{n}(g_{n})=g_{n+1}^{l}$. Then, we have the inductive limit $G_{(\infty)}$ of amalgamated free products and its subgroups $G_{(m)}$, which have the presentation
\[
G_{(m)}=\langle\, g_{-m},\dots,g_{m}\mid g_{-m}=g_{-m+1}^{l},\dots,g_{m-1}=g_{m}^{l}\,\rangle.
\]
Therefore, the group $G_{(\infty)}$ is isomorphic to $ \m{Z}[l,l^{-1}]$. Namely, it is considered as a group ring. The group $G_{(\infty)}$ is a countably generated abel group and it has only two left orderings, which is non-genuine isolated orderings. The group $G_{(\infty)}$ can be embedded in an HNN-extension by Proposition \ref{prop:HNN}. In fact, $G_{(\infty)}$ is embedded in the Baumslag-Solitar group ${\rm BS}(1,l)$ and a split short exact sequence holds as below:
\begin{align}
&G_{(\infty)}\rtimes\langle t\rangle\simeq \text{BS}(1,l)=\langle\, g,t\mid t^{-1}gt=g^{l}\,\rangle,\\
&1\to G_{(\infty)}\to \text{BS}(1,l)\to\langle t\rangle\to 1.
\end{align}
Therefore, the left orderings of $G_{(\infty)}$ are extended to $\text{BS}(1,l)$.
\end{exa}

\bs

We introduce a notation of convex subgroups.

\begin{dfn}
Given the pair $(G,P)$ of a left-orderable group $G$ and a positive cone $P$ of a left ordering of $G$. For every subset $S$ of $G$, we define the group $\Gamma^{S}(P)$ called the {\it convex subgroup generated by $S$} as  the minimal convex subgroup of $(G,P)$ containing $S$.
\end{dfn}

\begin{rem}
The above definition of $\Gamma^{S}(P)$ is a generalization of the definition in \cite[Section 2.1.1]{deroin2014groups}, where the convex subgroup $\Gamma^{S}(P)$ was defined only if $S$ is a singleton.
\end{rem}

We show two propositions on convex subgroups generated by subsets of $G$.

\begin{prop}\label{prop:convex}
Let $G$ be a left-orderable group, $P$ a positive cone of $G$, and $S$ a subset of $G$.

\begin{enumerate}[{\rm (i)}]
\item  The convex subgroup $\Gamma^S(P)$ generated by $S$ is equal to the intersection of all convex subgroups of $(G,P)$ including $S$. 
\item For every convex subgroup $C$ including $S$ and every subgroup $T$ of $\Gamma^{S}(P)$, $T$ is a subgroup of $C$.
\end{enumerate}
\end{prop}
\begin{proof}
(i) Since  an arbitrary intersection of convex subgroups is convex, the intersection of all convex subgroups including $S$ is convex. The minimality is obvious.

(ii) $\Gamma ^{S}(P)$ is a minimal convex subgroup of $(G,P)$ which contains $S$. Thus, $\Gamma ^{S}(P)\subset C$ follows from $S \subset C$. Namely, $T\subset C$ holds.
\end{proof}

\bs

We show the next key lemma in order to prove main theorems.

\begin{lem}\label{lem:main}
Let $\{G_{n}\}_{n\in\m{Z}}$ be a family  of  countable left-orderable groups, $\{A_{n}\}_{n\in\m{Z}}$ a family such that $A_{n}$ is a subgroup of $G_{n}$ for every $n\in\m{Z}$ and infinitely many $A_{n}$ are not isomorphic to $G_{n}$.
Let $\{\varphi_{n}:A_{n}\to G_{n+1}\}_{n\in\m{Z}}$ be a set of injective homomorphisms and $G_{(\infty)}$ the inductive limit of amalgamated free products $\{G_{(m)}\}_{m\In}$ of the family $(G_{n},A_{n},\varphi_{n})_{n\in\m{Z}}$. 
Let $P_{(m)}$ be a positive cone of $G_{(m)}$ for every $m\In$, and $ P_{(\infty)}$ the inductive limit of the family $\{P_{(m)}\}_{m\In}$, which is a positive cone of $G_{(\infty)}$.

We assume that $ P_{(\infty)}$ is not Conradian and $P_{(m)}$ is discrete for every $m\In$. The minimal positive element of $P_{(m)}$ is denoted by $p_{m}$.  We also assume the following convexity assumptions:

\begin{enumerate}[{\rm(C1)}]
\item For every $m\In$, $G_{m+1}\subset \Gamma^{A_{m}}(P_{(m+1)})$;
\item For every $m\In$, $G_{m-1}\subset \Gamma^{\varphi(A_{m-1})}(P_{(m)})$;
\item For every $m\In$, there exists an integer $l\geq m$ such that $A_{l}\subset \Gamma^{p_{m}}(P_{(l)})$.
\end{enumerate}

Then the positive cone $ P_{(\infty)}$ is approximated by its conjugate orderings. In particular, $ P_{(\infty)}$ is not the positive cone of an isolated ordering of $G_{(\infty)}$.

\end{lem}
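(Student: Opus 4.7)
The plan is to show that the Conradian soul of $\tilde{P}$ is trivial and then invoke \cite[Theorem 3.7]{MR2551663} to conclude that $\tilde{P}$ is approximated by its conjugates. The work lies in analysing the convex subgroups $\tilde{G}amma^{\tilde{p}_{m}}(\tilde{P})$ and using (C1)--(C3) to propagate convexity through the amalgamated tower.

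First I would fix $m\In$ and show that $\tilde{G}amma^{\tilde{p}_{m}}(\tilde{P}) = \tilde{G}$. By (C3) pick $l \geq m$ with $A_{l} \subset \tilde{G}amma^{\tilde{p}_{m}}(P_{(l)})$. Since $\tilde{G}amma^{\tilde{p}_{m}}(\tilde{P}) \cap G_{(l)}$ is a convex subgroup of $(P_{(l)}, G_{(l)})$ containing $\tilde{p}_{m}$, Proposition~\ref{prop:convex} forces $\tilde{G}amma^{\tilde{p}_{m}}(P_{(l)}) \subset \tilde{G}amma^{\tilde{p}_{m}}(\tilde{P})$, so $A_{l}$ sits inside the global convex subgroup. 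Applying (C1) then gives $G_{l+1} \subset \tilde{G}amma^{A_{l}}(P_{(l+1)}) \subset \tilde{G}amma^{A_{l}}(\tilde{P}) \subset \tilde{G}amma^{\tilde{p}_{m}}(\tilde{P})$, and because $A_{l+1} \subset G_{l+1}$ the same argument iterates to yield $G_{n} \subset \tilde{G}amma^{\tilde{p}_{m}}(\tilde{P})$ for all $n \geq l$. Symmetrically, using the identification $\varphi(A_{n-1}) = A_{n-1}$ inside the amalgamated product together with (C2), once $G_{n}$ is inside we get $\varphi(A_{n-1}) \subset G_{n} \subset \tilde{G}amma^{\tilde{p}_{m}}(\tilde{P})$ and then $G_{n-1} \subset \tilde{G}amma^{\varphi(A_{n-1})}(P_{(n)}) \subset \tilde{G}amma^{\tilde{p}_{m}}(\tilde{P})$, covering all $n \leq l$. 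Hence every $G_{n}$ lies in $\tilde{G}amma^{\tilde{p}_{m}}(\tilde{P})$, and so $\tilde{G}amma^{\tilde{p}_{m}}(\tilde{P}) = \tilde{G}$.

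Next I would deduce that the only proper convex subgroup of $(\tilde{P},\tilde{G})$ is $\{1\}$, and hence that the Conradian soul of $\tilde{P}$ is trivial. Every positive $c \in \tilde{G}$ lies in some $G_{(m_{0})}$, where the ordering restricts to $P_{(m_{0})}$ with minimal positive $\tilde{p}_{m_{0}}$; thus $1 <_{\tilde{G}} \tilde{p}_{m_{0}} \leq_{\tilde{G}} c$. A convex subgroup $C$ containing $c$ must then contain $\tilde{p}_{m_{0}}$ by convexity, and by the previous step $C \supset \tilde{G}amma^{\tilde{p}_{m_{0}}}(\tilde{P}) = \tilde{G}$. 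So no proper convex subgroup contains a positive element, and the only such is $\{1\}$. Since $\tilde{P}$ is non-Conradian by hypothesis, its Conradian soul is a proper convex subgroup, so it equals $\{1\}$.

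Finally, $\tilde{G}$ is countable as the inductive limit of countable groups, and $\tilde{P}$ is a non-Conradian, non-discrete left ordering with trivial Conradian soul. Invoking \cite[Theorem 3.7]{MR2551663} then yields that $\tilde{P}$ is approximated by its conjugates, giving the conclusion. I expect the main obstacle to be the bookkeeping in the second paragraph: each use of (C1), (C2), or (C3) must be applied at the correct level of the tower, and one has to keep careful track of the amalgamation identification $\varphi(A_{n}) = A_{n}$ whenever transferring convexity between adjacent $G_{n}$.
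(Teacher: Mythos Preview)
Your proposal is correct and follows essentially the same route as the paper: show that every nontrivial convex subgroup of $(\tilde{P},\tilde{G})$ must equal $\tilde{G}$ by using (C3) to seed with some $A_{l}$, then (C1) and (C2) to propagate upward and downward through the amalgamated tower, conclude the Conradian soul is trivial, and invoke \cite[Theorem~3.7]{MR2551663}. One small caveat: your final paragraph asserts that $\tilde{G}$ is countable, but the hypotheses of the lemma do not assume the $G_{n}$ are countable, so that remark is unsupported (and the paper's own proof does not appeal to countability at this point).
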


\begin{proof}
If the Conradian soul of $ P_{(\infty)}$ is trivial, then $ P_{(\infty)}$ is approximated by its conjugate orderings \cite[Theorem 3.7]{MR2551663}. Therefore, it is sufficient to prove that $ P_{(\infty)}$ has no non-trivial convex subgroups of $G_{(\infty)}$.

Let $C$ be a non-trivial convex subgroup of $(G_{(\infty)}, P_{(\infty)})$. There exists a non-trivial element $c\in C$ and $m\In$ such that $c$ belongs to $G_{(m)}$. Replacing $c$ with $c^{-1}$ if necessary, we can assume that $c$ is positive. The minimality of $p_m$ implies the inequality $1<_{m}p_{m}\leq_{m} c$ . Since $1\in C$ and $C$ is a convex subgroup, $p_{m}$ is contained in $C$. The condition (C3) implies that there exists an integer $l\geq m$ such that $A_{l}\subset \Gamma^{p_{m}}(P_{(l)})$.
Since $\Gamma^{p_{m}}(P_{(l)})$ is a subgroup of $C$,  $A_{l}$ is also a subgroup of $C$ by Proposition \ref{prop:convex}(ii).
Therefore, $\Gamma^{A_{l}}(P_{(l+1)})\subset C$ holds. The condition (C1) implies that $G_{l+1}\subset C$. 

Since $A_{l+1}\subset G_{l+1}$, the condition (C1) implies that $G_{l+2}\subset C$. Using 
the condition (C1) recursively, it is follows that $G_{i}\subset C$ for all $i\in\m{Z}_{\geq {l+1}}$.

Similarly, since $\varphi(A_{l})\subset G_{l+1}\subset C$, the condition (C2) implies that $G_{l}\subset C$. Using the condition (C2) recursively, it is follows that $G_{i}\subset C$ for all $i\in\m{Z}_{\leq l}$.

Consequently, the convex subgroup $C$ contains all groups $G_{i}$. Thus, $C\simeq G_{(\infty)}$ holds, which means that there exists no non-trivial convex subgroup of $G_{(\infty)}$. Since $P_{(\infty)}$ is not Conradian, the Conradian soul of $(G_{(\infty)}, P_{(\infty)})$ is a trivial group, that is, the positive cone $ P_{(\infty)}$ approximated by its conjugate orderings.
\end{proof}


\begin{rem}
In the ondition (C3), if $l$ is equal to $m$, then $A_{m}$ is a subgroup of $ \Gamma^{p_{m}}(P_{(m)})$. Since $p_{m}$ is the minimal positive element of $P_{(m)}$, $A_{m}\subset \Gamma^{p_{m}}(P_{(m)})=\langle p_{m}\rangle\simeq \m{Z}$ holds. 
\end{rem}


The next lemma is a variant of Lemma \ref{lem:main}.

\begin{lem}\label{lem:main2}
The statement of Lemma  \ref{lem:main} is also true if we replace the assumption that $P_{(\infty)}$ is not Conradian with either assumption {\rm (i)} or {\rm (ii)} exhibited below.
\begin{enumerate}[{\rm (i)}]
\item  $ P_{(\infty)}$ is not  Archimedean.
\item  $G_{(\infty)}$ is not abelian.
\end{enumerate}
\end{lem}

\begin{proof}

Although we assumed that $ P_{(\infty)}$ is not Conradian in Lemma \ref{lem:main}. it was used only to show the pair $(1,G_{(\infty)})$ is not a Conradian jump. Therefore, it is sufficient to prove that both conditions (i) and (ii) also imply that $(1,G_{(\infty)})$ is not a Conradian jump.

Supprose that $(1,G_{(\infty)})$ is a Conradian jump. Since the quotient ordering of $G_{(\infty)}/1=G_{(\infty)}$ is nothing but the left ordering induced by $P_{(\infty)}$, it is Archimedean. Hence the condition (i) implies that  $(1,G_{(\infty)})$ is not a Conradian jump.

If $P_{(\infty)}$ is Archimedean, then $G_{(\infty)}$ is abelian. It means that the condition (ii) implies the condition (i), and hence $(1,G_{(\infty)})$ is not a Conradian jump.

\end{proof}

\begin{rem}
If a positive cone of a left-orderable group $G$ is Archimedean, then there exists no non-trivial convex subgroups of $(G,P)$ \cite[Section 2.2]{MR3560661}. On the other hand, we consider the case that there exists a left ordering which is not Archimedean and has no non-trivial convex subgroups.
\end{rem}


\section{Proof of theorems}\label{sec:4}
In this section, we will apply propositions and lemmas in Section \ref{sec:3} to Dehornoy's and Ito's isolated orderings. 
More precisely, we first construct the inductive limit $G_{(\infty)}$ of an inductive system $\{G_{(m)}\}_{m\In}$ of amalgamated free products each of which is isomorphic to a group shown in Theorem \ref{thm:Dehornoy} or in Theorem \ref{thm:Ito}. We then construct the inductive limit $P_{(\infty)}$ of an inductive system $\{P_{(m)}\}_{m\In}$ of positive cones of $\{G_{(m)}\}_{m\In}$, and prove that $P_{(\infty)}$ is a positive cone of $G_{(\infty)}$. Although $P_{(m)}$ induces an isolated ordering of $G_{(m)}$ for each $m\In$, the left ordering of $G_{(\infty)}$ induced by $P_{(\infty)}$ is not isolated any longer.

\subsection{Inductive limits of Dehornoy's isolated orderings}\label{subsec:Dehornoy}
First of all, we construct the inductive limit of amalgamated free products each of which has a presentation described in Dehornoy's theorem (Theorem \ref{thm:Dehornoy}). 

\begin{lem}\label{lem:Dehornoy}
Let $n$ be an integer and $k_n$ and $l_n$ integers greater than one. Let
$G_n$ be the infinite cyclic group generated by $g_n$, $A_n$ the subgroup of $G_n$ generated by $g_n^{k_{n+1}}$, and $\varphi_n:A_n\to G_{n+1}$ the homomorphism defined by $\varphi_n(g_n^{k_{n+1}})= g_{n+1}^{l_{n+1}}$. Then the amalgamated free product $G_{(m)}$ constructed as in Subsection \ref{subsec:2} admits the presentation
\begin{align}
\langle\, g_{-m},\dots,g_{0},\dots ,g_{m}\mid g_{-m}^{k_{-m+1}}=g_{-m+1}^{l_{-m+1}},\dots,g_{m-1}^{k_{m}}=g_{m}^{l_{m}}\,\rangle,\label{pre:Dehornoy}
\end{align}
which is of the same type as the presentation stated in Theorem \ref{thm:Dehornoy}.
\end{lem}

\begin{proof}
The symbol $H_{(m)}$ denotes a group with presentation \eqref{pre:Dehornoy}. We prove by induction on $m$ that $G_{(m)}$ is isomorphic to $H_{(m)}$. If $m=0$, then we have $G_{(0)}=G_{0}=\langle g_{0}\rangle$ and $H_{(0)}=\langle g_0 \rangle$. Hence, we have $G_{(0)}=H_{(0)}$. Next, we assume that $G_{(i)}$ is isomorphic to $H_{(i)}$ for some $i\In$. By definition, $G'_{(i)}$ is isomorphic to $H_{(i)}*\langle g_{i+1}\rangle (g_i^{k_{i+1}}=g_{i+1}^{l_{i+1}})$, which admits the presentation
\begin{align}
\langle\, g_{-i},\dots,g_{i},g_{i+1}\mid g_{-i}^{k_{-i+1}}=g_{-i+1}^{l_{-i+1}},\dots,g_{i-1}^{k_{i}}=g_{i}^{l_{i}},g_{i}^{k_{i+1}}=g_{i+1}^{l_{i+1}}\,\rangle.
\end{align}
Then $G_{(i+1)}=\langle g_{-i+1}\rangle*G'_{(i)}(g_{-i-1}^{k_{-i}}=g_{-i}^{l_{-i}})$ is isomorphic to $H_{(i+1)}$. This completes the proof.
\end{proof}

\begin{rem}
By virtue of Theorem \ref{thm:Dehornoy} , the group $G_{(m)}$ admits the positive cone
\[
P_{(m)}=\langle\{g_{-m}\}\cup\{\,g_{-m}^{-k_{-m+1}+1}g_{-m+1}^{-k_{-m+2}+1}\dots g_{i-1}^{-k_{i}+1}g_{i}\mid -m+1\leq i\leq m\,\}\rangle^{+}.
\]
This positive cone induces an isolated ordering of $G_{(m)}$.  
\end{rem}

\medskip
We show some properties of the left ordering induced by $P_{(m)}$. 

\begin{prop}\label{prop:Dehornoy}
 For every non-negative integer $m$ and every integer $i$ with $-m\leq i \leq m$, we set $a_{i,m}\coloneqq g_{-m}^{-k_{-m+1}+1}g_{-m+1}^{-k_{-m+2}+1}\dots g_{i-1}^{-k_{i}+1}g_{i}$ and $a_{-m,m}\coloneqq g_{-m}$. The following equalities and inequalities hold.
\begin{enumerate}[{\rm (i)}]
\item $a_{i,m}=a_{i+1,m}g_{i+1}^{l_{i+1}-1}$,
\item $a_{i,m}=g_{-m-1}^{k_{-m}-1}a_{i,m+1}$,
\item $1<_{m}g_{i}<_{m}g_{i+1}$,
\item $a_{i+1,m}<_{m}a_{i,m}$,
\item $a_{i,m+1}<_{m+1}a_{i,m}$,
\item $1<_{m}a_{m,m}<_{m}a_{m-1,m}<_{m}\dots<_{m}a_{-m,m}=g_{-m}<_{m}\cdots<_{m} g_{m}$.
\end{enumerate}
\end{prop}

\begin{proof}
\begin{enumerate}[{\rm (i)}]
\item[(iii)] Fix an integer $m\geq0$.  For every $i\in\{-m,\dots,m\}$, since $a_{i,m+1}=a_{-m-1,m+1}^{-k_{-m}+1}a_{i,m}$ holds by (ii), we have $1<_{m+1}a_{i,m}$. Similarly, since $a_{i,m}=a_{-m,m}^{-k_{-m+1}+1}a_{i,m-1}$ holds, we have $1<_{m+1}a_{i,m-1}$. By induction, it follows that $1<_{m+1}a_{i,j}$ for all $j\in \{i,\dots,m\}$. It follows from $1<_ma_{-i+1,m}$ that $1<_{m}a_{-i+1,i}=g_{i}^{-k_{i+1}+1}g_{i+1}$. Therefore, we have $g_{i}^{k_{i+1}-1}<_{m}g_{i+1}$. This inequality show
\begin{align}
    1<_m a_{-m,m}=g_{-m}<_m g_{-m}^{k_{-m+1}-1}<g_{-m+1}.
\end{align}
By induction, we have $1<_{m}g_{i}<_{m}g_{i+1}$.
\item[(iv)] By (i) and (iii), we have $a_{i+1,m}^{-1}a_{i,m}=g_{i+1}^{l_{i+1}+1} >_{m}1$.
\item[(v)] By (i) and (iv), the following inequalities hold:
\begin{align}
a_{i,m+1}^{-1}a_{i,m}&=a_{i,m+1}^{-1}g_{-m-1}^{k_{-m}-1}a_{i,m+1}\\
&=(a_{i,m+1}^{-1}a_{-m-1,m+1})a_{-m-1,m+1}^{k_{-m}-2}a_{i,m+1}>_{m+1}1
\end{align}

\item[(vi)] These inequalities easily follow from (iii) and (iv).
\end{enumerate}
\end{proof}

\begin{lem}
The inductive limit $ P_{(\infty)}=\varinjlim P_{(m)}$ is well-defined and a positive cone of a left ordering of $G_{(\infty)}$.
\end{lem}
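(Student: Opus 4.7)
The plan is to verify the two defining properties of a positive cone from Proposition \ref{prop:basic} for $\tilde{P}$, after first checking that the inductive limit is well-defined, i.e.\ that the system $\{P_{(m)}\}_{m\In}$ actually forms an ascending chain inside $\tilde{G}=\bigcup_m G_{(m)}$, so that $\tilde{P}=\bigcup_{m}P_{(m)}$ is an unambiguous subset of $\tilde{G}$.

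First I would establish the monotonicity $P_{(m)}\subset P_{(m+1)}$ for every $m\In$. Since $P_{(m)}$ is the subsemigroup generated by $\{a_{i,m}\mid -m\leq i\leq m\}$, it suffices to check that each generator $a_{i,m}$ is $<_{m+1}$-positive. For $-m\leq i\leq m$ this is Proposition \ref{prop:Dehornoy}(v), which asserts $a_{i,m+1}<_{m+1}a_{i,m}$, so that $a_{i,m+1}^{-1}a_{i,m}\in P_{(m+1)}$ and hence $a_{i,m}\in P_{(m+1)}$. Equivalently, by Proposition \ref{prop:Dehornoy}(iii) one can write $a_{i,m}=g_{-m-1}^{k_{-m}-1}a_{i,m+1}=a_{-m-1,m+1}^{k_{-m}-1}a_{i,m+1}$, which is a product of two elements of $P_{(m+1)}$ because $k_{-m}\geq 2$. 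This gives the chain inclusion, so $\tilde{P}$ is well-defined as an increasing union.

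Next I would verify the two axioms of Proposition \ref{prop:basic}. For (i), given $p_{1},p_{2}\in\tilde{P}$, choose $m_{1},m_{2}$ with $p_{j}\in P_{(m_{j})}$ and set $M=\max\{m_{1},m_{2}\}$; the chain property just established yields $p_{1},p_{2}\in P_{(M)}$, so $p_{1}p_{2}\in P_{(M)}\subset\tilde{P}$. For (ii), any non-trivial $g\in\tilde{G}$ lies in some $G_{(m)}$, and since $P_{(m)}$ is a positive cone of $G_{(m)}$ exactly one of $g\in P_{(m)}$ or $g\in P_{(m)}^{-1}$ holds, yielding $\tilde{G}=\tilde{P}\cup\tilde{P}^{-1}\cup\{1\}$. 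Disjointness and $1\notin\tilde{P}$ are immediate: if $g\in\tilde{P}\cap\tilde{P}^{-1}$ then for $m$ large enough both $g$ and $g^{-1}$ lie in the same $P_{(m)}$, contradicting that $P_{(m)}$ is a positive cone; similarly $1\in\tilde{P}$ would force $1\in P_{(m)}$ for some $m$. This is essentially the argument already used in Proposition \ref{prop:general}(i), and could be invoked directly once monotonicity is known.

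The only substantive step is the monotonicity $P_{(m)}\subset P_{(m+1)}$: everything else is a routine compatibility check with the union construction. The statement $a_{i,m+1}<_{m+1}a_{i,m}$ compares two generators that differ by the reshuffling of the factor corresponding to the newly adjoined $g_{-m-1}$, and this is precisely what Proposition \ref{prop:Dehornoy} is tailored to supply through the identity in part (iii) together with the positivity of the outermost generator $g_{-m-1}=a_{-m-1,m+1}$.
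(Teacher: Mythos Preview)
Your proof is correct and follows essentially the same line as the paper's: the paper establishes $P_{(m)}\subset P_{(m+1)}$ by writing $a_{i,m}=a_{-m-1,m+1}^{\,k_{-m}-1}a_{i,m+1}$ via Proposition~\ref{prop:Dehornoy}(iii) (exactly your second formulation), and then invokes Proposition~\ref{prop:general}(i) directly rather than reproving its content. Your additional route through part~(v) and your explicit verification of the positive-cone axioms are fine but add nothing beyond what the paper already packages into Proposition~\ref{prop:general}(i).
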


\begin{proof}
Since $a_{i,m}=a_{-m-1,m+1}^{k_{-m}-1}a_{i,m+1}>_{m+1}1$ holds by Proposition \ref{prop:Dehornoy} (ii), $P_{(l)}\subset P_{(m)}$ follows for all non-negative integers $l,m$ with $l\leq m$. Hence the inductive limit $ P_{(\infty)}$ of the inductive system $\{P_{(m)}\}_{m\In}$ is defined. It follows from Proposition \ref{prop:general}(1) that $ P_{(\infty)}$ is a positive cone of $G_{(\infty)}$.
\end{proof}

We have thus obtained the pair $( G_{(\infty)},P_{(\infty)})$ of inductive limits, where $ P_{(\infty)}$ and $\{P_{(m)}\}_{m\In}$ are positive cones of $G_{(\infty)}$ and $\{G_{(m)}\}_{m\In}$, respectively. In addition, all $P_{(m)}$ are positive cones of isolated orderings since they are finitely generated. We are ready to prove one of our main theorems.

\begin{proof}[Proof of Theorem \ref{thm:main1}]
We have already proved that $ P_{(\infty)}$ is a positive cone of $G_{(\infty)}$.
We apply Lemma \ref{lem:main} and Lemma \ref{lem:main2} (ii) to prove that $P_{(\infty)}$ induces a non-isolated ordering.

\smallskip
\paragraph{Claim 1.}$ G_{(\infty)}$ is not abelian.
\smallskip


Suppose that $G_{(\infty)}$ is abelian. For any non-negative integer $m$, $G_{(m)}$ is an amalgamated free product presented as in Theorem \ref{thm:main1} and abelian. Some $k_{i},l_{i}$ are equal to one for non-negative integer $i$ with $-m+1\leq i\leq m$. However, this contradicts the assumption. 

\smallskip
\paragraph{Claim 2.}$P_{(m)}$ is discrete for every $m\In$.
\smallskip

Suppose that there exists $g\in G_{(m)}$ such that $1<_{m}g<_{m}a_{m,m}$. We write $g=a_{i,m}p$, where $p$ is an element of $P_{(m)}$ and $i$ is a non-negative integer smaller than $m$. Then, we have $1<_{m}a_{i,m}p<_{m}a_{m,m}$. By multiplying by $a_{m,m}^{-1}$ on the left, we have $(a_{m,m}^{-1}a_{i,m})p<_{m}1$. From Proposition \ref{prop:Dehornoy} (iv), this is a contradiction. Therefore, the element $a_{m,m}$ is the minimal positive element of $P_{(m)}$, i.e. $P_{(m)}$ is discrete.



\smallskip
\paragraph{Claim 3.} The condition (C1) of Lemma \ref{lem:main} is satisfied.
\smallskip

Let $C$ be a convex subgroup of $(P_{(m+1)},G_{(m+1)})$ which contains $\Gamma^{\langle g_{m}^{k_{m+1}}\rangle}(P_{(m+1)})$. Then  $C$ contains $g_{m}^{k_{m+1}}$, which means $g_{m}\in C$ by convexity. Since $g_{m}^{k_{m+1}}=g_{m+1}^{l_{m+1}}$ holds, $C$ contains $g_{m+1}^{l_{m+1}}$. Thus, the group $C$ contains $g_{m+1}$, i.e. $\langle g_{m+1}\rangle\subset C$. By Proposition \ref{prop:convex} (i), $G_{m+1}\subset \Gamma^{A_{m}}(P_{(m+1)})$ holds.

\smallskip
\paragraph{Claim 4.}The condition (C2) of Lemma \ref{lem:main} is satisfied.
\smallskip

The proof is similar to that of Claim 3.

\smallskip
\paragraph{Claim 5.}The condition (C3) of Lemma \ref{lem:main} is satisfied.
\smallskip

Let $C$ be a convex subgroup of $(P_{(m+1)},G_{(m+1)})$ which contains $\Gamma^{a_{m,m}}(P_{(m+1)})$. Then  $C$ contains $a_{m,m+1}$ and $a_{m+1,m+1}$ by Proposition \ref{prop:Dehornoy}(iv) and (v). By Proposition \ref{prop:Dehornoy} (ii), since $g_{-m-1}^{k_{-m}-1}=a_{m,m}a_{m,m+1}^{-1}$ holds, $C$ contains $g_{-m-1}^{k_{-m}-1}$, i.e. $C$ contains $g_{-m-1}$. Since, $C$ contains $g_{-m-1}^{k_{-m}}$, $C$ contains $g_{-m}^{l_{-m}}$, which means that $C$ contains $g_{-m}$. We operate this recursive method. Consequently, $C$ contains $g_{m+1}$, which mean that $C$ contains $\langle g_{m+1}^{k_{m+2}}\rangle$ by Proposition \ref{prop:convex} (i).

\medskip
We complete to prove all conditions of Lemma \ref{lem:main}. Therefore, by Lemma \ref{lem:main}, this completes the proof.
\end{proof}

By Proposition \ref{prop:HNN}, we obtain a non-isolated ordering on an HNN-extension as below.

\begin{cor}\label{cor:Dehornoy}
We make the same assumption as in Theorem \ref{thm:Dehornoy}.

Then, $G_{(\infty)}$ is embedded in the HNN-extension $H$ with presentation
\begin{align}
\langle\, g,t\mid tat^{-1}=\varphi(a)\ \text{for all}\ a\in A_{m}, g\in G_{m}, m\In\,\rangle.
\end{align}
Moreover, $ P_{(\infty)}$ is extended to two positive cones of left orderings of $H$, which are approximated by its conjugate orderings, i.e. which induce non-isolated orderings.
\end{cor}
\begin{proof}
By Proposition \ref{prop:HNN},  the two positive cones of $H$ denoted by $\bar{P}_{1}$ and $\bar{P}_{2}$ are obtained from $ P_{(\infty)}$. Since $G_{(\infty)}$ has only one convex subgroup of $(\bar{P}_{1},H)$ and $(\bar{P}_{2},H)$ and $ G_{(\infty)}$ is not abelian, the Conradian souls of $(\bar{P}_{1},H)$ and $(\bar{P}_{2},H)$ are trivial. Namely, $\bar{P}_{1}$ and $\bar{P}_{2}$ are approximated by its conjugate orderings. In particular, these are not isolated orderings.
\end{proof}

\subsection{Inductive limits of Ito's isolated orderings}\label{subsec:Ito}

Next, we construct the inductive limit of amalgamated free products each of which is described in Ito's theorem (Theorem \ref{thm:Ito}). 

\begin{lem}\label{lem:Ito}
For every $n\in\m{Z}$, let $G_{n}$ be a finitely generated group, $z_{n}$ a non-trivial central element of $G_{n}$ and $P_{n}=\langle g_{1,n},\dots,g_{j_{n},n}\rangle^{+}$ the positive cone of an isolated left ordering $<_{n}$ of $G_{n}$. Let $A_n$ be the infinite cyclic group $\langle z_n\rangle$ generated by $z_n$, and $\varphi_n:A_n\to G_{n+1}$ the homomorphism defined by $\varphi_n(z_n)=z_{n+1}$. We assume that the $g_{i,n}<_{n}z_{{n}}$ holds for all $1\leq i\leq n$.

Then, we obtain the inductive limit $G_{(\infty)}=\varinjlim G_{(m)}$ of amalgamated free products  of the family $(G_{n},A_{n},\varphi_{n})_{n\in\m{Z}}$ and the positive cone $P_{(m)}$ of $G_{(m)}$, which induces an isolated ordering and contains the positive cone $P_{m}$ of $G_m$. Moreover, we obtain the positive cone $ P_{(\infty)}$ of $G_{(\infty)}$, and $P_{(\infty)}$ is the inductive limit of $\{P_{(m)}\}_{m\In}$.
\end{lem} 

\begin{proof}
A similar assertion is written in Ito's paper \cite{MR3476136}.

It is easy to see that the groups $G_{(\infty)}$ and $G_{(m)}$ are well-defined.

By applying Theorem \ref{thm:Ito} and, we define the positive cones $P_{(m)}$ of $G_{(m)}$ for all $m\In$ inductively.

We first define $P_{(0)}$ as $P_{{0}}$. Suppose that $P_{(m)}$ is defined for some $m\In$. In order to apply Theorem \ref{thm:Ito}, we set $G=G_{(m)}$, $H=G_{m+1}$, $z_G=z_0$, $z_H=z_{m+1}$, $\mathcal{G}=P_{(m)}$, $\mathcal{H}=P_{m+1}$. By virtue of
Theorem \ref{thm:Ito}, we have the positive cone $P'_{(m)}$ of an isolated ordering of $G'_{(m)}$. In order to apply Theorem \ref{thm:Ito}, we next set $G=G_{-m-1}$, $H=G'_{(m)}$, $z_G=z_{-m-1}$, $z_H=z_{0}$, $\mathcal{G}=P_{-m-1}$, $\mathcal{H}=P'_{(m)}$. By virtue of Theorem \ref{thm:Ito}, we have the positive cone $P_{(m+1)}$ of an isolated ordering of $G_{(m+1)}$.
 
 By induction, we obtain the positive cones $P_{(m)}$ of isolated orderings, which form an inductive system by inclusions. Thus,  by Proposition \ref{prop:general} (1), $ P_{(\infty)}\coloneqq\varinjlim P_{(m)}$ is well-defined and the positive cone of a left ordering of $G_{(\infty)}$.
\end{proof}

We define $ <_{(m)}$ to be the left ordering induced by $P_{(m)}$ for each non-negative integer $m$. We have thus obtained the pair $(  G_{(\infty)},P_{(\infty)})$ of inductive limits, where $ P_{(\infty)}$ and $P_{(m)}$ are positive cones of $G_{(\infty)}$ and $G_{(m)}$, respectively. In addition, all $P_{(m)}$ are positive cones of isolated orderings. We are ready to prove one of our main theorems. 

\begin{proof}[Proof of Theorem \ref{thm:main2}] By Lemma \ref{lem:Ito}, $A_{n}=\langle z_{{n}}\rangle$, $G_{(\infty)}=\varinjlim G_{(m)}$ and $ P_{(\infty)}=\varinjlim P_{(m)}$ are well-defined. We only prove that $ P_{(\infty)}$ is approximated by its conjugate orderings. We apply Lemma \ref{lem:main} and Lemma \ref{lem:main2} (ii) to prove that $P_{(\infty)}$ induces a non-isolated ordering.

\smallskip
\paragraph{Claim 1.}$ G_{(\infty)}$ is not abelian.
\smallskip

For every integer $n$, $G_n$ is not trivial because $z_n$ is an element of $G_n$. By hypothesis, the subgroups $G_{0}, G_{1}$ and $G'_{(0)}=G_{0}*G_{1}(\langle z_{{0}}\rangle \xrightarrow{\varphi_{0}} \langle z_{{1}}\rangle)$ are abelian. Since all elements of $G_{0}$ commute with all elements of $G_{1}$ in $G_{(0)}'$, one of $G_0$ and $G_1$ includes the other. Without loss of generality, we suppose that $G_{1}$ is a subgroup of $G_{0}$, which means that $\langle z_{{1}\rangle}$ is isomorphic to $G_{1}$. We proceed the above operation inductively. It follows that $\langle z_{{n}}\rangle$ is isomorphic to $G_{n}$ for all $n\in\m{Z}_{\neq0}$, which is contradiction.

\smallskip
\paragraph{Claim 2.}$P_{(m)}$ is discrete.
\smallskip

It follows from Theorem \ref{thm:Ito}, which asserts that $P_{(m)}$ has the minimal positive element.

\smallskip
\paragraph{Claim 3.}The condition (C1) of Lemma \ref{lem:main} is satisfied.
\smallskip

Fix a non-negative integer $m$ and let $C$ be a convex subgroup of $( G_{(\infty)},P_{(\infty)})$ which contains $\Gamma^{\langle z_{{m}}\rangle}(P_{m+1})$. The equation $z_{{m}}=z_{{m+1}}$ holds in $G_{(m+1)}$. In addition, for all generators $g_{i,m+1}\in G_{m+1}$, since $z_{{m+1}}^{-1}\leq_{(m+1)}g_{i,m+1}\leq_{(m+1)}z_{{m+1}}$ holds, $C$ contains all generators by convexity. Therefore, by Proposition \ref{prop:convex} (i), we have $G_{m+1}\subset \Gamma^{\langle z_{{m}}\rangle}(P_{m+1})$.

\smallskip
\paragraph{Claim 4.}The condition (C2) of Lemma \ref{lem:main} is satisfied.
\smallskip

The proof is similar to that of Claim 3.

\smallskip
\paragraph{Claim 5.}The condition (C3) of Lemma \ref{lem:main} is satisfied.
\smallskip

We prove the subclaim below.

\smallskip
\subparagraph{Subclaim.} The left ordering induced by $P_{(\infty)}$ is dense.
\smallskip

Suppose that $ P_{(\infty)}$ is discrete, that is, it has the minimal positive element $p_{\infty}$ which is contained in $P_{(m)}$. Let $p_m$ be the minimal positive element of $(G_{(m)},P_{(m)})$. Then, $p_{\infty}$ equals to $p_{m}$. By proof of Lemma \ref{lem:Ito}, $P'_{(m)}$ has the minimal positive element denoted by $p'_{m}$ which satisfies the following equation by Theorem \ref{thm:Ito}:
\[
p_{\infty}\leq_{(m)'}p'_{m}\leq_{(m)'}p_{m}=p_{\infty},
\]
where $<_{(m)'}$ is the left ordering induced by $P'_{(m)}$. Namely, $p_{m}$ equals to $p'_{m}$. Since $p'_{m}=p_{m}z_{{m+1}}^{-1}g_{1,m+1}$ holds by Theorem \ref{thm:Ito}, $p_{m}=p'_{m}=p_{m}z_{{m+1}}^{-1}g_{1,m+1}$ follows, that is, $z_{{m+1}}=g_{1,m+1}$ holds. Then we have $g_{1,m+1}=g_{2,m+1}=\cdots =z_{{m+1}}$, which means that $G_{m+1}\simeq \langle z_{{m+1}}\rangle$ holds. Similarly, $P_{(m+1)}$ has the minimal positive element $p_{m+1}$, which equals to $p'_{m}$. Since $p_{m+1}=g_{1,-m-1}z_{{-m-1}}^{-1}p'_{m}$ holds by Theorem \ref{thm:Ito}, $p'_{m}=p_{m+1}=g_{1,-m-1}z_{{-m-1}}^{-1}p'_{m}$ follows, that is, $G_{-m-1}\simeq \langle z_{{-m-1}}\rangle$ holds. This means that for all $n>m$, $G_{n}=\langle z_{{n}}\rangle\simeq\langle z_{{-n}}\rangle=G_{-n}$ holds by induction, which is a contradiction.

\medskip

We next prove Claim 5. Let $C$ be a convex subgroup $C$ of $(G_{(\infty)},P_{(\infty)})$ which contains the minimal positive element $p_m$ of $(G_{(m)},P_{(m)})$ for some $m\In$. Since $P_{(\infty)}$ is dense, if we take large enough $l$, there exists a element $g\in G'_{(l)}$ with $1<_{(l)'}g<_{(l)'}p_m$, where $<_{(l)'}$ is the left ordering induced by $P'_{(l)}$ in the proof of Lemma \ref{lem:Ito}. Hence $C$ contains $g$ by convexity. It is sufficient to prove that $C$ contains $z_{l+1}$.

In what follows, we use the notation of Theorem \ref{thm:Ito}. That is, we assume that $G_{(l)}$ is generated by $g_{1},\dots,g_{s}$,  $G_{l+1}$ is generated by $h_{1},\dots,h_{t}$,  and $G'_{(l)}= G_{(l)}*G_{l+1}(\langle z_{l}\rangle\xrightarrow{\varphi_{l}}\langle z_{l+1}\rangle)$ is generated by $x_{1},\dots,x_{s},h_{1},\dots,h_{t}$, where $x_i=g_iz_{l+1}h_1$ holds for every $i\in\{1,\dots,s\}$. We suppose that each of the sets of the generators generate the positive cones
\begin{align}
    &P_{(l)}=\langle g_{1},\dots,g_{s}\rangle^+,\\
    &P_{l+1}=\langle h_{1},\dots,h_{t}\rangle^+,\\
    &P'_{(l)}=\langle x_{1},\dots,x_{s},h_{1},\dots,h_{t}\rangle^+,
\end{align}
with the isolated left orderings satisfying
\begin{align}
&1<_{(l)}g_1<_{(l)}\cdots<_{(l)}g_s,\\
&1<_{l+1}h_1<_{l+1}\cdots <_{l+1} h_t,\\
&1<_{(l)'}x_1<_{(l)'}\cdots <_{(l)'}x_s<_{(l)'}h_1<_{(l)'}\cdots<_{(l)'}h_t.
\end{align}
In addition we assume [CF($G_{(l)}$)],[CF($G_{l+1}$)],[INV($G_{l+1}$)] as in Theorem \ref{thm:Ito}.

Then $C$ contains $x_{1}$ and some other positive element $y$ of $G'_{(l)}$. Multiplying $y$ by $x_{1}^{-1}$ from the left if necessary, we have $y=y'p$, where $y'$ is a positive element and equals to a power of a element contained in $\{x_{2},\dots,x_{s},h_{1},\dots,h_{t}\}$, and $p$ is a positive element of $G'_{(l)}$. Suppose that $y<_{(l)'}x_{2}$ holds. We have $p<_{(l)'}y'^{-1}x_{2}\leq_{(l)'}1$, which is contradiction. Therefore, $C$ contains $x_{2}$ by convexity. We have $1<_{(l)'}z_{l+1}h_{2}z_{l+1}^{-1}$ because of [INV($G_{l+1}$)]. Therefore, we obtain the following inequalities:
\begin{align}
x_{1}^{-1}x_{2}z_{l+1}^{-1}&=h_{1}^{-1}z_{l+1}g_{1}^{-1}g_{2}z_{l+1}h_{2}z_{l+1}^{-1}\\
&=(h_{1}^{-1}z_{l+1})(g_{1}^{-1}g_{2})(z_{l+1}h_{2}z_{l+1}^{-1})\\
&>_{(l)'}1,
\end{align}
where the words inside the three parentheses are positive. Hence, $z_{l+1}$ is smaller than $x_{1}^{-1}x_{2}$, which means that $C$ contains $z_{l+1}$ by convexity.

\bs
Therefore, by Lemma \ref{lem:main}, this completes the proof.

\end{proof}

Theorem \ref{thm:main2} leads to a corollary similar to Corollary \ref{cor:Dehornoy} by Proposition \ref{prop:HNN}.

\begin{cor}\label{cor:Ito}
We assume the same hypothesis as Theorem \ref{thm:main2}. Then, $G_{(\infty)}$ is embedded in the HNN-extension $H$ with presentation
\begin{align}
\langle\, g,t\mid tat^{-1}=\varphi(a)\ \text{for all}\ a\in A_{m}, g\in G_{m}, m\In\,\rangle
\end{align}
Moreover, $ P_{(\infty)}$ is extended to two positive cones of left orderings of $H$, which are approximated by its conjugate orderings, i.e. which induce non-isolated orderings.
\end{cor}
\begin{proof}
It is proved by the same way as Corollary \ref{cor:Dehornoy}.
\end{proof}

We showed two examples that all $P_{(m)}$ induce isolated orderings but $ P_{(\infty)}$ dose not. This means that it is difficult to find isolated orderings of countably generated groups. Therefore, we pose the following question. 
\begin{qes}
Are there exist genuine isolated orderings of inductive limits of amalgamated free products of countably many groups?
\end{qes}

\bibliographystyle{plain}
\bibliography{202309ref}

\end{document}